\newcommand{\nc}{\newcommand}
 \nc{\aff}{\mathfrak{aff} } \nc{\bb}{\mathfrak{b}}
\nc{\cc}{\mathfrak{c} }  \nc{\dd}{\mathfrak{d} }
 \nc{\ggo}{\mathfrak{g} }
 \nc{\hh}{\mathfrak{h} }  \nc{\ii}{\mathfrak{i} }
 \nc{\jj}{\mathfrak{j} }  \nc{\kk}{\mathfrak{k} }
\nc{\mm}{\mathfrak{m} }   \nc{\nn}{\mathfrak{n} }
\nc{\pp}{\mathfrak{p} }  \nc{\rr}{\mathfrak{r} } \nc{\sg}{\mathfrak{s} }
 \nc{\sog}{\mathfrak{so} }  \nc{\spg}{\mathfrak{sp}}
 \nc{\sug}{\mathfrak{su} }  \nc{\slg}{\mathfrak{sl}}
 \nc{\tg}{\mathfrak{t} }  \nc{\uu}{\mathfrak{u} }
 \nc{\vv}{\mathfrak{v} } \nc{\ww}{\mathfrak{w} }
 \nc{\zz}{\mathfrak{z} }
 \nc{\ggob}{\overline{\mathfrak{g}}}
\nc{\glg}{\mathfrak{gl} }
\nc{\pca}{\mathcal{P}} \nc{\nca}{\mathcal{N}}
 \nc{\vp}{\varphi} \nc{\ddt}{\frac{{\rm d}}{{\rm d}t}}
 \nc{\la}{\langle} \nc{\ra}{\rangle}
 \nc{\SO}{{\sf SO}} \nc{\Spe}{{\sf Sp}} \nc{\Sl}{{\sf Sl}}
 \nc{\SU}{{\sf SU}} \nc{\Or}{{\sf O}} \nc{\U}{{\sf U}}
 \nc{\Gl}{{\sf Gl}} \nc{\Se}{{\sf S}} \nc{\Cl}{{\sf Cl}}
 \nc{\Spin}{{\sf Spin}} \nc{\Pin}{{\sf Pin}}
 \nc{\RR}{{\mathbb R}} \nc{\HH}{{\mathbb H}} \nc{\CC}{{\mathbb C}}
 \nc{\ZZ}{{\mathbb Z}} \nc{\FF}{{\mathbb F}} \nc{\NN}{{\mathbb N}}
 \nc{\GG}{{\mathbb G}} \nc{\JJ}{{\mathbb J}} \nc{\II}{{\mathbb I}}
 \nc{\KK}{{\mathbb K}} \nc{\DD}{{\mathbb D}}
 \nc{\ad}{\operatorname{ad}} \nc{\Ad}{\operatorname{Ad}}
 \nc{\coad}{\operatorname{coad}} \nc{\ct}{\operatorname{T}}
 \nc{\rank}{\operatorname{rank}} \nc{\Irr}{\operatorname{Irr}}
 \nc{\End}{\operatorname{End}} \nc{\Aut}{\operatorname{Aut}}
 \nc{\Inn}{\operatorname{Inn}} \nc{\Der}{\operatorname{Der}}
 \nc{\Dera}{\operatorname{Dera}} \nc{\Auto}{\operatorname{Auto}}
 \nc{\GL}{\operatorname{GL}}
 \nc{\SL}{\operatorname{SL}}
 \nc{\coord}{\operatorname{coord}}
 \renewcommand{\span}{\operatorname{span}}
 \nc{\codim}{\operatorname{codim}}
 \theoremstyle{plain}
 \newtheorem{teo}{Theorem}[section]
 \newtheorem{pro}[teo]{Proposition}
 \newtheorem{cor}[teo]{Corollary}
 \newtheorem{lm}[teo]{Lemma}
 \renewenvironment{proof}{\noindent \emph{Proof.}}{\hfill $\blacksquare$}
 \theoremstyle{remark}
 \newtheorem{rem}[teo]{Remark}
 \newtheorem{defi}[teo]{Definition}
 \newtheorem{example}[teo]{Example}
 \newcommand{\R}{\mathbb R}
\newcommand{\Z}{\mathbb Z}
\newcommand{\mg}{\mathfrak g }
\newcommand{\mb}{\mathfrak b }
\newcommand{\mn}{\mathfrak n }
\newcommand{\mz}{\mathfrak z }
\newcommand{\mv}{\mathfrak v }
\newcommand{\mh}{\mathfrak h }
\newcommand{\mgg}{\mathfrak g }
\newcommand{\mpp}{\mathfrak p }
\nc{\rmc}{\textrm{\rm C}}
\nc{\rad}{\textrm{\rm Rad}}
\newcommand{\bil}{\left\langle \;,\;\right\rangle}
\newcommand{\lela}{\left \langle}
\newcommand{\rira}{\right \rangle}
\newcommand{\lra}{\longrightarrow}
\newcommand{\bs}{\backslash}
\begin{document}
\title[Lie algebras admitting symmetric, invariant and nondegenerate bilinear forms]{Lie algebras admitting symmetric, invariant and nondegenerate bilinear forms}

\author{Viviana del Barco}
\email{delbarc@fceia.unr.edu.ar}
\address{CONICET -- Universidad Nacional de Rosario.\\ ECEN-FCEIA, Depto. de Matem\'a\-tica, Av. Pellegrini 250, 2000 Rosario, Santa Fe, Argentina.}

\date{\today}

\begin{abstract} We present structural properties of Lie algebras admitting symmetric, invariant and nondegenerate bilinear forms. We show that these properties are not satisfied by nilradicals of parabolic subalgebras of real split forms of complex simple Lie algebras, neither by 2-step nilpotent Lie algebras associated with graphs, with only few exceptions. These rare cases are, essentially, abelian Lie algebras, the free 3-step nilpotent Lie algebra on 2-generators and the free 2-step nilpotent Lie algebra on 3-generators.
\end{abstract}

\thanks{This work was partially supported by SECyT-UNR, CONICET and ANPCyT. \\
{\it (2010) Mathematics Subject Classification}:   17B05, 17B20, 17B22, 17B30, 53C50.\\
{\it Keywords: Lie algebras, ad-invariant metrics, free nilpotent Lie algebras, nilradicals of parabolic subalgebras.}
}


\maketitle

\section{Introduction} 
In this paper we consider Lie algebras $\mgg$ admitting ad-invariant metrics, that is, symmetric bilinear forms which are nondegenerate and for which inner derivations are skew-symmetric. Several authors considered these Lie algebras because of their re\-levance in different areas of mathematics and physics \cite{BK,Bo,Co,FiS,HK,Ka,Kos,NW}. Lie algebras carrying ad-invariant metrics are also known as metric, self-dual or quadratic Lie algebras, depending on the context. Recent compilation works on advances and applications are \cite{KaOl2,Ov}.

In this paper we establish structural properties of Lie algebras admitting ad-invariant metrics. Those properties become a set of obstructions useful when deciding whether a given Lie algebra can be equipped with such a metric or not. The main result of this work is conclusive (see Theorems \ref{teo.classgraph} and \ref{teo.classif}):
\begin{center}
{\em Let $\mn$ be a nilpotent Lie algebra admitting an ad-invariant metric and such that either $\mn$ is a 2-step nilpotent Lie algebra associated with a graph, or $\mn$ is the nilradical of a parabolic subalgebra of a real split form of a complex simple Lie algebra. Then $\mn$ is abelian, $\mn$ is isomorphic to $\mn_{2,3}$ or it is isomorphic to a direct sum of copies of $\mn_{3,2}$  and an abelian factor.} 
\end{center}
Here $\mn_{p,k}$ denotes the free $k$-step nilpotent Lie algebra on $p$-generators.

It is well known that semisimple Lie algebras and abelian Lie algebras carry ad-invariant metrics.  Nonsemisimple Lie algebras may also admit such metrics, examples of which are solvable or nilpotent Lie algebras \cite{Me,Pe}. But there are restrictions, for instance, solvable Lie algebras with null center or nilpotent Lie algebras with one-dimensional center cannot be equipped with an ad-invariant metric. 

Lie algebras carrying ad-invariant metrics can be constructed by the double extension procedure; any such Lie algebra is obtained in this way when it is nonsimple and of dimension $\geq 2$ \cite{FS,Ka,MR}. The production of examples is guaranteed by this method and it also allowed to obtain algebraic and geometric properties of these Lie algebras and their associated Lie groups with  bi-invariant metrics. Yet, one may reach isomorphic Lie algebras starting from two nonisomorphic ones, making difficult the classification of Lie algebras admitting ad-invariant metrics by means of this extension procedure.

Still, whether a given nonsemisimple Lie algebra admits an ad-invariant metric is an open question. 
Partial results when restricting to particular families of Lie algebras can be found in \cite{BK,dBOV,Me2,KaOl}. 

Our work here contributes with general Lie-algebraic structure facts that Lie algebras admitting ad-invariant metrics satisfy. When considering the families of nilpotent Lie algebras mentioned above we see that these conditions are quite restrictive, giving rise to the conclusive result stated before.

The organization of this work is as follows. Section \ref{sec.metric} introduces notation and structural properties of Lie algebras admitting ad-invariant metrics.  We apply the obstructions obtained here to particular families of nilpotent Lie algebras. Section \ref{sec.graphs} is devoted to study which Lie algebras associated with graphs admit ad-invariant metrics. Meanwhile Section \ref{sec.nilrad} reviews known properties of the central descending series of nilradicals of parabolic subalgebras of split forms of complex simple Lie algebras, and also we give particular behavior of roots in the last step of its lower central series (Proposition \ref{progammadecomp}). This is a key step to narrow the possible Lie algebras admitting ad-invariant metrics in this family and achieve the full classification.

\section{Invariant metrics on Lie algebras}\label{sec.metric}
In this section we introduce notations and basic facts about the structure of Lie algebras admitting ad-invariant metrics. 

Let $\mgg$ be a real Lie algebra of dimension $m$. Given a subspace $\mv$ of $\mgg$ we consider the series $\{\rmc^j(\mv)\}$ and $\{\rmc_j(\mv)\}$ where for all $j\geq 0$ they are given by
\begin{eqnarray*}
&&\rmc^0(\mv)=\mv,\quad  \rmc^j(\mv)=[\mg,\rmc^{j-1}(\mv)],\;\; j\geq 1\quad \mbox{ and } \\
&&\rmc_0(\mv)=0,\quad \rmc_1(\mv)=\{X\in\mg:\,[X,\mv]=0\},\\
&&\rmc_j(\mv)=\{X\in\mg:\,[X,\mg]\subset \rmc_{j-1}(\mv)\},\;\; j\geq 2.\end{eqnarray*}
The subalgebra $\rmc_1(\mv)$ is the centralizer $\mz(\mv)$ of $\mv$ in $\mgg$. Given $X\in \mgg$, we denote $\mz(\R X)$ simply as $\mz(X)$. Notice that $\rmc^{j}(\mv)\subset \rmc^{j-1}(\mv)$ (resp.  $\rmc^{j}(\mv)\subset \rmc^{j+1}(\mv)$) for all $j\geq 0$ if and only if $\mv$ (resp. $\mz(\mv)$) is an ideal.

The particular series $\{\rmc^j(\mg)\}$ and $\{\rmc_j(\mg)\}$ are, respectively, the lower central series and the upper central series of $\mgg$. We notice that $\rmc^1(\mgg)=[\mg,\mg]$ is the commutator of $\mg$ and $\rmc_1(\mgg)$ is the center of $\mg$ which we denote by $\mz$.

Let $\la \,,\, \ra:\mgg\times \mgg\lra \R$ be a symmetric bilinear form on $\mgg$. The orthogonal of a subspace $\mv$ of $\mg$ is the vector subspace of $\mgg$ given by $$\mv^\bot=\{Y\in\mgg:\; \lela  X,Y\rira=0  \mbox{ for all }X\in\mv\}.$$
We denote $\rad=\mgg^\bot$, that is, \begin{equation}\label{rad}
\rad =\{Y\in\mgg: \lela X,Y\rira=0 \mbox{ for all } X\in \mgg\}.
\end{equation}
In particular, $\bil$ is nondegenerate if and only if $\rad=0$.

A symmetric bilinear form $\bil$ is called invariant when
\begin{equation}\label{adme}
\la [X, Y],  Z\ra + \la Y, [X, Z]\ra = 0 \qquad \mbox{ for all }X, Y, Z \in  \mgg.
\end{equation}
If $\bil$ is an invariant symmetric bilinear form then $\rad$ is an ideal of $\mgg$.

\begin{defi} An {\em ad-invariant metric} on a Lie algebra $\mgg$ is a nondegenerate symmetric invariant bilinear form $\bil$.\end{defi}

When $\mgg$ can be endowed with an ad-invariant metric the series associated to a subspace $\mv\subset \mgg$ are related.

\begin{pro} \label{pro.adinv}\label{padinv.ort} \label{padinv.dim} Let $\mgg$ be a Lie algebra endowed with an ad-invariant metric $\bil$ and let $\mv$ be a subspace of $\mgg$. Then for all $j\geq 1$
\begin{equation}\label{eq.ort}
\rmc^{j}(\mv)^\bot=\rmc_j(\mv)
\end{equation}
and therefore 
\begin{equation}
\dim\rmc_j(\mv)+\dim \rmc^j(\mv)= \dim \mgg.\label{eq.dim}
\end{equation}
\end{pro}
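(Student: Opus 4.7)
The plan is to prove (\ref{eq.ort}) by induction on $j$, and then deduce (\ref{eq.dim}) from nondegeneracy of $\bil$, which for any subspace $W\subset\mgg$ gives $\dim W + \dim W^\bot = \dim \mgg$ (equivalently, $\mgg = W \oplus W^\bot$ as vector spaces, regardless of whether the restriction of $\bil$ to $W$ is degenerate or not, since $\rad=0$).

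For the base case $j=1$, I would compute $\rmc^1(\mv)^\bot = [\mgg,\mv]^\bot$ directly. Using the invariance identity (\ref{adme}), an element $Z\in\mgg$ satisfies $\la Z, [X,V]\ra = 0$ for every $X\in\mgg$ and $V\in\mv$ if and only if $\la X, [Z,V]\ra = 0$ for every $X\in\mgg$ and $V\in\mv$, because $\la Z,[X,V]\ra = -\la[X,Z],V\ra = \la X,[Z,V]\ra$ after using (\ref{adme}) twice (or once together with antisymmetry of the bracket). Since $\bil$ is nondegenerate, this is equivalent to $[Z,V]=0$ for all $V\in\mv$, i.e.\ $Z\in\mz(\mv)=\rmc_1(\mv)$.

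For the inductive step, assume $\rmc^{j-1}(\mv)^\bot=\rmc_{j-1}(\mv)$ with $j\geq 2$, and note that $\rmc^{j}(\mv)=[\mgg,\rmc^{j-1}(\mv)]$. Given $Z\in\mgg$, by invariance
\begin{equation*}
\la Z,[X,W]\ra = \la X,[Z,W]\ra \qquad \text{for all } X\in\mgg,\; W\in\rmc^{j-1}(\mv),
\end{equation*}
so $Z\in\rmc^{j}(\mv)^\bot$ exactly when $[Z,W]\in\mgg^\bot=0$ is orthogonal to every $X\in\mgg$, which by nondegeneracy says $[Z,X]\in \rmc^{j-1}(\mv)^\bot$ for every $X\in\mgg$. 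Applying the inductive hypothesis this reads $[Z,\mgg]\subset \rmc_{j-1}(\mv)$, which is precisely the condition $Z\in\rmc_j(\mv)$ from the definition.

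The argument is essentially a bookkeeping induction, so I do not expect a genuine obstacle; the only point that requires a little care is the asymmetry in the definitions of $\rmc_1(\mv)$ (using $\mv$) versus $\rmc_j(\mv)$ for $j\geq 2$ (using $\mgg$), which matches exactly the difference between computing orthogonals of $\mv$ itself in the base case and of bracketed spaces $[\mgg,\,\cdot\,]$ in the inductive step.
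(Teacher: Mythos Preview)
Your proof is correct and follows essentially the same inductive strategy as the paper's own argument. One small caveat on presentation: in the inductive step the clean identity is $\la Z,[X,W]\ra = \la [Z,X],W\ra$, which directly yields $[Z,X]\in\rmc^{j-1}(\mv)^\bot$; the version you displayed differs by a sign, and the intermediate phrase about ``$[Z,W]\in\mgg^\bot=0$'' is a distinct (equivalent) reformulation rather than a step toward the stated conclusion---but since you are only testing for vanishing, none of this affects correctness.
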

\begin{proof} We proceed by induction on $j\geq 1$. Set $X\in \rmc^1(\mv)$ and $Y\in\rmc_1(\mv)=\mz(\mv)$, so $X=[U,V]$ for some $V\in\mv$ and we have $\lela X, Y\rira=\lela[U,V], Y\rira=\lela U,[V,Y]\rira=0$. Thus $\rmc_1(\mv)\subset\rmc^{1}(\mv)^\bot$. Now let $X\in\rmc^1(\mv)^\bot$ and let $V\in\mv$. For any $Y\in\mgg$ we have 
$\lela [X,V],Y\rira=\lela X,[V,Y]\rira=0$, which implies $[X,V]=0$ since $\bil$ is nondegenerate. Thus $X\in\rmc_1(\mv)$.

Assume the statement holds for $j-1$ and fix $X\in \rmc^{j}(\mv)$, so $X=[U,V]$ with $V\in\rmc^{j-1}(\mv)=\rmc_{j-1}(\mv)^\bot$. Given $Y\in\rmc_{j}(\mv)$ we have $\lela X, Y\rira=\lela[U,V], Y\rira=-\lela V,[U,Y]\rira$ where now $[U,Y]\in\rmc_{j-1}(\mv)$ since $Y\in\rmc_j(\mv)$, thus $\lela X,Y\rira=0$ and $\rmc_j(\mv)\subset\rmc^{j}(\mv)^\bot$.
Take $X\in\rmc^j(\mv)^\bot$, then for any $V\in\rmc^{j-1}(\mv)$ and $Y\in\mgg$ we have $\lela [X,Y],V\rira=\lela X, [Y,V]\rira$ where $[Y,V]\in\rmc^j(\mgg)$. So $\lela [X,Y],V\rira=0$ and nondegeneracy implies $[X,\mg]\subset\rmc^{j-1}(\mv)^\bot\subset\rmc_{j-1}(\mv)$. Thus $X\in\rmc_j(\mv)$ and $\rmc^{j}(\mv)^\bot=\rmc_j(\mv)$ for all $j\geq 1$. Formula \eqref{eq.dim} holds since $\bil$ is nondegenerate.
\end{proof}\smallskip

The result in Proposition  \ref{pro.adinv} for the particular case $\mv=\mgg$, that is, the lower and upper central series of $\mgg$, was already known (see for instance \cite{FS,HK}). It is worth noticing that \eqref{eq.dim} may hold for $\mgg$ but fail for $\mv\neq \mgg$. For instance take $\mgg=\span_\R\{e_i:i=1,\ldots,8\}$ with nonzero bracket relations $[e_1,e_2]=e_7$, $[e_2,e_3]=e_8$,  $[e_3,e_4]=e_5$ and $[e_1,e_4]=e_6$; and $\mv=\span_\R\{e_1,e_2\}$. 

Under the hypothesis of Proposition \ref{pro.adinv} one has
\begin{equation}\label{eq.comm}
[\rmc_{j+1}(\mv),\mgg]\subset \rmc^{j}(\mv)^\bot\quad\mbox{  for all }j\geq 0\end{equation} since $[\rmc_{j+1}(\mv),\mgg]\subset \rmc_j(\mgg)$ by definition and $\rmc_j(\mgg)\subset \rmc^{j}(\mv)^\bot$ by Eq. \eqref{eq.ort}.

Let $\mgg$ be a Lie algebra admitting ad-invariant metrics. Then Eq. \eqref{eq.comm} implies \begin{equation}\label{eq.cap}  \bigcap_{X\in\mgg}[\mz(X),\mgg]\subset \bigcap_{X\in\mgg} (\R X)^\bot=\rad,\end{equation} where the orthogonal of $\R X$ is taken with respect to any nondegenerate symmetric bilinear form.
Assume now that $\rmc^1(\mgg)\neq \mgg$ and let $\mgg=\mv_1\oplus\cdots\oplus\mv_s\oplus\rmc^1(\mgg)$ be a vector space direct sum of $\mgg$ with $\mv_i\neq 0$ for $i=1,\ldots,s$. Related to this decomposition we define the following ideal
\begin{equation}\label{center}
\Theta_{\mv_1,\ldots,\mv_s}=\mz\cap \bigcap_{i=1,\ldots,s}[ \mz(\mv_i),\mgg].\end{equation}
This ideal is a subspace of $\rad$, independently of the choice of $\mv_i$. Indeed Eqns. \eqref{eq.ort} and \eqref{eq.comm} imply $\mz\subset \rmc^1(\mgg)^\bot$ and $[ \mz(\mv_i),\mgg]\subset \mv_i^\bot$, respectively. Thus
\begin{equation}
\Theta_{\mv_1,\ldots,\mv_s} \subset \rmc^1(\mgg)^\bot\cap \bigcap_{i=1,\ldots,s}\mv_i^\bot =\rad.\label{eq.theta}\end{equation} 
These facts account to the following.

\begin{pro} \label{pro.cap} Let $\mgg$ be a Lie algebra admitting ad-invariant metrics. Then \begin{equation}\label{padinv.cap}\bigcap_{X\in\mgg}[\mz(X),\mgg]=0.\end{equation} If moreover $\rmc^1(\mgg)\neq \mgg$ then  \begin{equation}\label{padinv.theta}\Theta_{\mv_1,\ldots,\mv_s}=0\end{equation} for any nontrivial decomposition $\mgg=\mv_1\oplus\cdots\oplus\mv_s\oplus\rmc^1(\mgg)$. \end{pro}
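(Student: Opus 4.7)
The plan is to observe that both assertions are almost completely assembled in the discussion that precedes the statement: equations \eqref{eq.cap} and \eqref{eq.theta} already place the two objects of interest inside $\rad$, and the hypothesis that $\bil$ is an ad-invariant metric says precisely that $\rad=0$. So the whole task is to chain these inclusions together.

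For \eqref{padinv.cap}, I would apply \eqref{eq.comm} with $\mv=\R X$ and $j=0$, which reads $[\mz(X),\mgg]\subset (\R X)^\bot$. Intersecting over $X\in\mgg$ gives
\[
\bigcap_{X\in\mgg}[\mz(X),\mgg]\;\subset\;\bigcap_{X\in\mgg}(\R X)^\bot\;=\;\rad,
\]
which is exactly \eqref{eq.cap}. Since $\bil$ is nondegenerate, $\rad=0$, and \eqref{padinv.cap} follows.

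For \eqref{padinv.theta}, assume $\rmc^1(\mgg)\neq\mgg$ and fix a decomposition $\mgg=\mv_1\oplus\cdots\oplus\mv_s\oplus\rmc^1(\mgg)$ with each $\mv_i\neq 0$. From \eqref{eq.ort} applied to $\mv=\mgg$ one has $\mz=\rmc_1(\mgg)=\rmc^1(\mgg)^\bot$, while \eqref{eq.comm} applied with $\mv=\mv_i$ and $j=0$ gives $[\mz(\mv_i),\mgg]\subset \mv_i^\bot$. Substituting these into the definition \eqref{center} yields
\[
\Theta_{\mv_1,\ldots,\mv_s}\;\subset\;\rmc^1(\mgg)^\bot\cap\mv_1^\bot\cap\cdots\cap\mv_s^\bot\;=\;\bigl(\mv_1+\cdots+\mv_s+\rmc^1(\mgg)\bigr)^\bot\;=\;\mgg^\bot\;=\;\rad,
\]
and again nondegeneracy of $\bil$ forces this to be zero.

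Since every step is an explicit application of \eqref{eq.ort} and \eqref{eq.comm} together with the definition of nondegeneracy, I do not anticipate any real obstacle; the only mild point to mention is that the inclusion $\Theta_{\mv_1,\ldots,\mv_s}\subset\rad$ is independent of the chosen complement of $\rmc^1(\mgg)$, which is already remarked just before the proposition and does not require further justification.
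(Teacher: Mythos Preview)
Your proof is correct and is essentially identical to the paper's own argument: the proposition is stated immediately after equations \eqref{eq.cap} and \eqref{eq.theta} with the phrase ``These facts account to the following'', so the paper's proof \emph{is} precisely the chain of inclusions you reproduce, together with $\rad=0$ from nondegeneracy. The only cosmetic difference is that you invoke \eqref{eq.ort} to get the equality $\mz=\rmc^1(\mgg)^\bot$ whereas the paper only records the inclusion $\mz\subset\rmc^1(\mgg)^\bot$, but either suffices.
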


The conditions above are structural properties of a Lie algebra admitting ad-invariant metrics, but they are independent of the metric. 

\begin{rem} Given an arbitrary Lie algebra, the ideal  $\Theta_{\mv_1,\ldots,\mv_s}$ varies depending on the decomposition of $\mgg$ taken. 
One can find examples of Lie algebras $\mgg$ for which $\Theta=0$ for any decomposition, but not admitting ad-invariant metrics.  
\end{rem}

\begin{rem} \label{rem.degenerate} When a bilinear form $\bil$ on $\mgg$ is symmetric and invariant (possibly degenerate) some of the reasonings above are still valid. Specifically, in this context, one has that for any $\mv\subset \mgg$, $\rmc_j(\mv)\subset \rmc^{j}(\mv)^\bot$ for all $j\geq 0$ and Eq. \eqref{eq.comm} holds as well as inclusions in \eqref{eq.cap} and \eqref{eq.theta}.
\end{rem}

A Lie algebra $\mgg$ is a $k$-step nilpotent Lie algebra if $\rmc^{k}(\mgg)=0$ and $\rmc^{k-1}(\mgg)\neq 0$. A 2-step nilpotent Lie algebra $\mgg$ is nonsingular if $\ad_X:\mgg\lra \mz$ is onto for all $X\in \mgg$; if this is not the case, we say that $\mgg$ is singular.

\begin{cor} \label{padinv} 
Let $\mgg$ be a 2-step nilpotent Lie algebra satisfying one of the following conditions:
\begin{enumerate}
\item $\mgg$ is nonsingular,
\item \label{padinv.2} $\mgg=\mv_1\oplus\mv_2\oplus\rmc^1(\mgg)$ where $[\mv_i,\mv_i]=0$, $i=1,2$ and $[\mv_1,\mv_2]= \rmc^1(\mn)$.
\end{enumerate}
Then $\mgg$ does not admit ad-invariant metrics.
\end{cor}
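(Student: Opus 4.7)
The plan is to derive both statements from Proposition~\ref{pro.cap}, which gives the two obstructions $\bigcap_{X\in\mgg}[\mz(X),\mgg]=0$ and, when $\rmc^1(\mgg)\neq\mgg$, $\Theta_{\mv_1,\ldots,\mv_s}=0$. In both cases I will show the relevant intersection contains $\rmc^1(\mgg)$ (which is nonzero by the definition of 2-step nilpotent adopted in the paper), hence no ad-invariant metric can exist. Throughout I will use heavily that $\mgg$ 2-step nilpotent means $[\mgg,\mgg]\subset\mz$, so that every bracket $[\mz(\mv),\mgg]$ automatically lies inside $\mz$.

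For (1), assume $\mgg$ is nonsingular. Since $\ad_X:\mgg\to\mz$ is surjective for every $X\notin\mz$, one has $[\mgg,\mgg]=\mz$. I compute $[\mz(X),\mgg]$ for an arbitrary $X\in\mgg$: if $X\notin\mz$, then $X\in\mz(X)$ gives $[\mz(X),\mgg]\supset[X,\mgg]=\mz$, while the reverse inclusion follows from $[\mgg,\mgg]=\mz$; if $X\in\mz$, then $\mz(X)=\mgg$ and again $[\mz(X),\mgg]=\mz$. Thus $\bigcap_{X\in\mgg}[\mz(X),\mgg]=\mz\neq 0$, contradicting \eqref{padinv.cap}.

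For (2), first note that since $\mgg$ is 2-step nilpotent one has $\rmc^1(\mgg)\subsetneq\mgg$, so the hypothesis of \eqref{padinv.theta} is met with the given decomposition $\mgg=\mv_1\oplus\mv_2\oplus\rmc^1(\mgg)$. I then determine $[\mz(\mv_i),\mgg]$: because $[\mv_i,\mv_i]=0$ and $\rmc^1(\mgg)\subset\mz$, the subspace $\mv_i\oplus\rmc^1(\mgg)$ lies in $\mz(\mv_i)$, hence
\[
[\mz(\mv_i),\mgg]\supset [\mv_i,\mgg]=[\mv_i,\mv_j]=\rmc^1(\mgg),\qquad \{i,j\}=\{1,2\},
\]
using the assumption $[\mv_1,\mv_2]=\rmc^1(\mgg)$. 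The reverse inclusion is automatic from $[\mgg,\mgg]=\rmc^1(\mgg)$, so $[\mz(\mv_i),\mgg]=\rmc^1(\mgg)$ for $i=1,2$. Combining with $\rmc^1(\mgg)\subset\mz$ gives
\[
\Theta_{\mv_1,\mv_2}=\mz\cap[\mz(\mv_1),\mgg]\cap[\mz(\mv_2),\mgg]=\rmc^1(\mgg)\neq 0,
\]
again contradicting Proposition~\ref{pro.cap}.

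There is no serious obstacle here: once one recognizes that 2-step nilpotency forces $[\mz(\mv),\mgg]$ to live in the small space $\rmc^1(\mgg)=\mz$, both hypotheses (nonsingularity, or the bipartite-bracket assumption in (2)) are precisely what is needed to force a lower bound $[\mz(\mv),\mgg]\supset\rmc^1(\mgg)$, so the obstructions from Proposition~\ref{pro.cap} are violated. The only small subtlety is the reading of ``$\ad_X:\mgg\to\mz$ onto for all $X$'' in (1), which must be interpreted in the usual sense (for $X\notin\mz$); the case $X\in\mz$ is handled separately as above.
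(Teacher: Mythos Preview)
Your proof is correct and follows essentially the same route as the paper: both parts invoke Proposition~\ref{pro.cap} by showing that $\rmc^1(\mgg)$ (or $\mz$) sits inside the relevant intersection, using $X\in\mz(X)$ for part~(1) and $\mv_i\subset\mz(\mv_i)$ for part~(2). Your treatment of part~(1) is in fact slightly more careful than the paper's, since you separately handle the case $X\in\mz$ (where the literal definition of nonsingular is vacuous) via $\mz(X)=\mgg$ and the identity $\rmc^1(\mgg)=\mz$ that you first derive from nonsingularity.
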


\begin{proof} 1. Let $\mgg$ be a 2-step nilpotent Lie algebra, then $0\neq\rmc^1(\mgg)\subset \mz$. If $\mgg$ is also nonsingular then $\mz=\ad_X(\mgg)=[X,\mgg]\subset [\mz(X),\mgg]$, for all $X\in\mgg$. Therefore $\mz\subset\bigcap_{X\in\mgg}[\mz(X),\mgg]$ and  \eqref{padinv.cap} does not hold. 

2. The hypothesis imply $\mv_i\subset \mz(\mv_i)$ since $[\mv_i,\mv_i]=0$ for $i=1,2$. Thus 
$$0\neq\rmc^1(\mn)=[\mv_1,\mv_2]\subset [\mz(\mv_1),\mv_2]=[\mz(\mv_1),\mv_2\oplus\mv_1]=[\mz(\mv_1),\mgg];$$ analogously  $\rmc^1(\mn)\subset [\mz(\mv_2),\mgg]$. Thus $\Theta_{\mv_1,\mv_2}\neq 0$ and now \eqref{padinv.theta} is not satisfied.
\end{proof}

\begin{rem} Heisenberg-Reiter Lie algebras \cite{T} are 2-step nilpotent Lie algebras $\mgg$ having a decomposition $\mgg=\mv_1\oplus \mv_2\oplus \mz$ where the Lie bracket verifies $[\mv_i,\mv_i]=0$, $i=1,2$. Corollary \ref{padinv} assures that Heisenberg-Reiter Lie algebras verifying $\mz=\rmc^1(\mn)$ cannot be endowed with an ad-invariant metrics.
\end{rem}

\section{Nilpotent Lie algebras associated with graphs}\label{sec.graphs}

Let $G=(V,E)$ be a finite simple graph where $V$ is the set of vertices and $E$ is the nonempty set of edges, that is, $E$ is a collection of unordered pairs of distinct vertices. An edge given by the unordered pair $v,w\in V$ is denoted $e=v w$ and we say that $e$ is incident on $v$ and $w$. Given $v\in V$ the set of neighbors of $v$ is $N(v)=\{w\in V:\,vw\in E\}$ and the degree of $v$ is $d(v)=|N(v)|$. A subset $W\subseteq V$ is a vertex-covering of $G$ if every edge of $G$ is incident on at least one vertex in $W$. Set $V_0=\{v\in V:\,d(v)=0\}$ and $V_1=\{v\in V:\,d(v)\geq 1\}$, so $V$ is the disjoint union of $V_0$ and $V_1$, with $V_1$ nonempty.
Let $\mathcal V_i$ be the real vector space having $V_i$ as a basis and define $\mathcal V=\mathcal V_0\oplus \mathcal V_1$. Let $\mathcal U$ be the subspace of $\Lambda^2\mathcal V_1$ spanned by $\{v\wedge w:\,v,w\in V, \,vw\in E\}$. 

The Lie algebra $\mn_G$ associated to $G$ is $\mn_G=\mathcal V\oplus \mathcal U$ where the Lie bracket is obtained by extending the following rules by skew-symmetry. If $v,w\in V$ then $[v,w]=v\wedge w$ if $vw\in E$ and $0$ otherwise, and $[v,u]=0$ for all $v\in V$ and $u\in\mathcal U$. The dimension of $\mn_G$ is $|V|+|E|$ and it is always 2-step nilpotent. The commutator is $\rmc^1(\mn_G)=\mathcal U$ and its center is $\mz=\mathcal V_0\oplus \mathcal U$. 

Let $\tilde G$ be the subgraph of $G$ with $\tilde G=(V_1,E)$. Clearly, $\mn_G\simeq \R^s\oplus\mn_{\tilde{G}}$ where $s=|V_0|$ (possibly $s=0$). Note that  $\rmc^1(\mn_{\tilde G})=\mathcal U$ is the center of $\mn_{\tilde G}$. Moreover, if $\tilde G_1, \ldots,\tilde G_r$ are the connected components of $\tilde G$ (possibly $r=1$) then $\mn_G\simeq \R^s\oplus \mn_{\tilde G_1}\oplus\cdots \oplus\mn_{\tilde G_r}$. For further notations, definitions and properties of graphs we refer to \cite{Bol}.

\begin{example}\label{exc3} The $3$-cycle graph is $C_3=(V,E)$ with $V=\{v_1,v_2,v_3\}$ and edges $E=\{v_1v_2,v_2v_3,v_3v_1\}$. The Lie algebra $\mn_{C_3}$ is $6$-dimensional and isomorphic to the free $2$-step nilpotent Lie algebra on $3$-generators $\mn_{3,2}$. Any direct sum $\R^s\oplus \mn_{3,2}\oplus\stackrel{r}{\cdots} \oplus\mn_{3,2}$ is the Lie algebra associated with the graph having $s$ isolated vertices and $r$ nontrivial connected components each of which is a $3$-cycle. 

The Lie algebra $\mn_{3,2}$ admits ad-invariant metrics \cite{dBO,FS} and also does any direct sum $\R^s\oplus \mn_{3,2}\oplus\cdots \oplus\mn_{3,2}$ by taking product metrics.
\end{example}

The next lemma describes subspaces of $\mn_G$ considered in the previous section. Given $v\in V$, we say that $v$ is in a $3$-cycle if there exists a subgraph $H=(W,F)$ of $G$ isomorphic to $C_3$ such that $v\in W$. If this is not the case, we say that $v$ is not in a $3$-cycle.

\begin{lm}
Let $\mn_G$ be the Lie algebra associated with $G=(V,E)$ and let $v\in V$. Then $\mz(v)=\mz\oplus \span_\R V\backslash N(v)$ and if $v$ is not in a 3-cycle then $V\bs N(v)$ is a vertex-covering of $G$ and $[\mz(v),\mn_G]=\rmc^1(\mn_G)$.
\end{lm}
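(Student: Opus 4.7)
The plan is to verify the three assertions in turn, each reducing to a direct unpacking of the construction of $\mn_G$. For the centralizer $\mz(v)$, I expand an arbitrary $X \in \mn_G$ in the basis $V \cup \{w \wedge w' : ww' \in E\}$ as $X = \sum_{w \in V} a_w\, w + u$ with $u \in \mathcal U$, and compute $[X,v] = \sum_{w \in N(v)} a_w\, w \wedge v$. The vectors $\{w \wedge v : w \in N(v)\}$ are distinct basis elements of $\mathcal U$, hence linearly independent, so $[X,v]=0$ forces $a_w = 0$ for every $w \in N(v)$. The reverse inclusion is immediate since $\mathcal U$ is central and each $w \in V \setminus N(v)$ satisfies $[w,v]=0$ by the definition of the bracket. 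Noting that $V \setminus N(v)$ contains all isolated vertices and also $v$ itself (because $G$ is simple, so $v \notin N(v)$), the sum $\mathcal U + \span_\R(V \setminus N(v))$ is direct, and upon absorbing $\mathcal V_0 \subseteq \span_\R(V \setminus N(v))$ this coincides with the claimed $\mz \oplus \span_\R(V \setminus N(v))$.

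For the vertex-covering statement, assume $v$ is not in a $3$-cycle and argue by contradiction: if some edge $xy \in E$ had both endpoints in $N(v)$, then $vx, vy, xy \in E$ and the induced subgraph on $\{v,x,y\}$ would be a copy of $C_3$ containing $v$, contrary to hypothesis. Hence every edge of $G$ has an endpoint in $V \setminus N(v)$.

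For the final equality, the inclusion $[\mz(v), \mn_G] \subseteq \rmc^1(\mn_G) = \mathcal U$ is automatic since $\mn_G$ is $2$-step nilpotent. Conversely, each basis vector $w_1 \wedge w_2$ of $\mathcal U$ has, by the vertex-covering property, an endpoint $w_i \in V \setminus N(v)$; so $w_1 \wedge w_2 = \pm [w_i, w_j]$ lies in $[\span_\R(V \setminus N(v)), \span_\R V] \subseteq [\mz(v), \mn_G]$, and summing over the basis yields $\mathcal U \subseteq [\mz(v), \mn_G]$. There is no genuine obstacle in this lemma; the only parts that require a moment of care are the bookkeeping around $V \setminus N(v)$ to confirm that the sum with $\mz$ is direct, and the graph-theoretic observation that forbidding a $3$-cycle through $v$ is exactly what forces the non-neighbors of $v$ to cover the edge set.
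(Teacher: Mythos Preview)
Your proof is correct and follows essentially the same approach as the paper's: compute $\mz(v)$ from the bracket rules, derive the vertex-covering property by contradiction (both endpoints in $N(v)$ would produce a $3$-cycle through $v$), and obtain $[\mz(v),\mn_G]=\mathcal U$ by realizing each basis edge vector as a bracket with one factor in $V\setminus N(v)$. If anything, you are slightly more careful than the paper in flagging that $\mathcal V_0\subseteq\span_\R(V\setminus N(v))$, so the ``$\oplus$'' in the lemma's statement should be read modulo that overlap.
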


\begin{proof}
From the Lie bracket defining $\mn_G$,  one has $\mz\oplus \span_\R V\backslash N(v)\subset\mz(v)$. The fact $$\mn_G=\mz\oplus \span_\R V\backslash N(v)\oplus \span_\R N(v)$$ together with $[w,v]\neq 0$ for all $w\in N(v)$ yield to $\mz(v)=\mz\oplus \span_\R V\backslash N(v)$. 

Let $v$ be a vertex in $V$ not in a 3-cycle. Suppose $d(v)=0$, then $N(v)=\emptyset$ and $V\bs N(v)=V$ is trivially a vertex-covering of $G$. Assume now that $d(v)>0$ and suppose there is an edge $e\in E$ not incident on any vertex in $V\bs N(v)$. Then $e=v_1v_2$ with $v_1,v_2\in N(v)$ and therefore the subgraph with vertices $v,v_1,v_2$ is a 3-cycle in $G$, contradicting the hypothesis.
Hence $V\bs N(v)$ is a vertex-covering of $G$. 

Clearly $[\mz(v),\mn_G]\subset \rmc^1(\mn_G)=\mathcal U$. Let $v_1\wedge w$ a basic element of $\mathcal U$, then $e=v_1w\in E$. Since $V\bs N(v)$ is a vertex-covering we may assume that $v_1\in V\bs N(v)$. Thus $v_1\wedge w\in[\span_\R V\bs N(v),\mn_G]$ and $\mathcal U\subset [\mz(v),\mn_G]$. Therefore $[\mz(v),\mn_G]=\rmc^1(\mn_G)$. 
\end{proof}

\begin{lm}
If $G=(V,E)$ is such that $\mn_G$ admits an ad-invariant metric, then \begin{equation}\label{eq.tree}|E|=|V_1|\end{equation} and every vertex in $V_1$ is in a 3-cycle. \end{lm}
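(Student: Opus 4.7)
The plan is to establish the two conclusions in turn. The identity $|E|=|V_1|$ is a dimension count: applying Proposition \ref{pro.adinv} with $\mv=\mn_G$ and $j=1$, and using $\rmc_1(\mn_G)=\mz=\mathcal{V}_0\oplus\mathcal{U}$ together with $\rmc^1(\mn_G)=\mathcal{U}$, the equality \eqref{eq.dim} reads $(|V_0|+|E|)+|E|=|V_0|+|V_1|+|E|$, which yields $|E|=|V_1|$ at once.

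For the second claim I will argue by contradiction via the $\Theta$-obstruction \eqref{padinv.theta}. Assume that some $v_0\in V_1$ lies in no 3-cycle and consider the vertex-by-vertex decomposition
\[
\mn_G \;=\; \bigoplus_{v\in V}\R v \;\oplus\; \rmc^1(\mn_G),
\]
with $\mv_v:=\R v$ for each $v\in V$. Combining the previous lemma $\mz(v)=\mz\oplus\span_\R(V\setminus N(v))$ with a direct expansion of brackets into edge-indexed wedges, I will show
\[
[\mz(v),\mn_G] \;=\; \span_\R\{\,y\wedge z : yz\in E,\; \{y,z\}\not\subseteq N(v)\,\}.
\]
Since this subspace is contained in $\mathcal{U}\subseteq\mz$, the definition \eqref{center} of $\Theta$ reduces to $\bigcap_{v\in V}[\mz(v),\mn_G]$. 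Observing that $\{y,z\}\subseteq N(v)$ says precisely that $v$ is a common neighbour of $y$ and $z$, so that $\{v,y,z\}$ is a 3-cycle through the edge $yz$, I conclude that $\Theta$ is spanned by the wedges $y\wedge z$ whose underlying edge belongs to no 3-cycle of $G$.

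To finish, take any neighbour $w$ of $v_0$, which exists because $v_0\in V_1$. Since $v_0$ lies in no 3-cycle, the edge $v_0w$ lies in no 3-cycle of $G$, and hence $v_0\wedge w\in\Theta$ is nonzero, contradicting \eqref{padinv.theta}. The only non-formal step is the explicit description of $[\mz(v),\mn_G]$; this follows immediately from the 2-step structure of $\mn_G$ and the form of $\mz(v)$ obtained in the previous lemma, so no serious obstacle is expected.
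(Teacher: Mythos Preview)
Your argument is correct. The dimension count for $|E|=|V_1|$ is exactly the paper's, so only the second claim calls for comment.

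For that part the paper takes a shorter route. Given $v\in V_1$ not in a $3$-cycle, the previous lemma already gives $[\mz(v),\mn_G]=\rmc^1(\mn_G)$; then the single orthogonality relation \eqref{eq.comm} with $\mv=\R v$, $j=0$ forces $\rmc^1(\mn_G)\subset(\R v)^\bot$, hence $v\in\rmc^1(\mn_G)^\bot=\mz=\mathcal V_0\oplus\mathcal U$, which contradicts $v\in V_1$. Your proof instead invokes the $\Theta$-obstruction \eqref{padinv.theta} for the finest decomposition $\bigoplus_{v\in V}\R v$, computes $[\mz(v),\mn_G]=\span\{y\wedge z: yz\in E,\ \{y,z\}\not\subseteq N(v)\}$ for \emph{every} $v$, and identifies $\Theta$ as the span of the edge-wedges whose edge lies in no $3$-cycle. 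This is more work, but it yields a pleasant graph-theoretic description of $\Theta$ that the paper's argument does not provide, and it does not rely on the ``$V\setminus N(v)$ is a vertex-covering'' conclusion of the previous lemma. The paper's approach, on the other hand, uses that conclusion directly and avoids computing any intersections.
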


\begin{proof} The 2-step nilpotent Lie algebra $\mn_G$ satisfies: $\dim \mz=|V_0|+|E|$, $\dim \rmc^1(\mn_G)=|E|$ and $\dim \mn_G=|V_0|+|V_1|+|E|$. If $\mn_G$ admits an ad-invariant metric, Eq. \eqref{eq.dim} implies 
 $$|V_0|+|E|+|E|= |V_0|+|V_1|+|E|,$$ so
 $G$ satisfies Eq. \eqref{eq.tree}.
 
 Let $v\in V_1$ such that $v$ is not in a $3$-cycle. Then $[\mz(v),\mn_G]=\rmc^1(\mn_G)\subset (\R v)^\bot$ because of \eqref{eq.comm} and thus $v\in \rmc^1(\mn_G)^\bot=\mz=\mathcal V_0\oplus \mathcal U$ which is a contradiction.
\end{proof}

\begin{teo} \label{teo.classgraph}
The Lie algebra $\mn_G$ associated with a graph $G$ admits an ad-invariant metric if and only if $\mn_G$ is isomorphic to a direct sum $\R^s\oplus\mn_{3,2}\oplus\cdots\oplus\mn_{3,2}$. Equivalently, $\mn_G$ admits an ad-invariant metric if and only if each connected component of $G$ is a 3-cycle or an isolated vertex. 
\end{teo}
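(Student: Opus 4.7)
The plan is to split the proof into the obvious sufficient direction and the work needed for necessity, and then to reduce the necessary direction to a short degree-counting argument on the graph.

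For sufficiency, Example \ref{exc3} already exhibits ad-invariant metrics on $\R^s\oplus \mn_{3,2}\oplus \cdots \oplus \mn_{3,2}$ as product metrics, so nothing further is needed. The equivalent graph-theoretic reformulation follows at once from the decomposition $\mn_G \simeq \R^s \oplus \mn_{\tilde G_1} \oplus \cdots \oplus \mn_{\tilde G_r}$ recalled at the beginning of this section: isolated vertices contribute abelian factors $\R^s$, and a connected component equal to $C_3$ contributes $\mn_{3,2}$.

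For necessity, assume $\mn_G$ admits an ad-invariant metric. By the previous lemma, $|E|=|V_1|$ and every vertex $v\in V_1$ lies in a $3$-cycle of $G$. In particular, $v$ has two distinct neighbors connected by an edge, so $d(v)\geq 2$. Let $\tilde G_1,\ldots,\tilde G_r$ be the connected components of $\tilde G$, and write $\tilde G_i=(W_i,F_i)$. Because the edges of $G$ incident on a vertex of $W_i$ stay inside $\tilde G_i$, the degree of $v\in W_i$ computed in $\tilde G_i$ coincides with $d(v)$. The handshake lemma then gives
\begin{equation*}
2|F_i|=\sum_{v\in W_i} d(v)\geq 2|W_i|,
\end{equation*}
hence $|F_i|\geq |W_i|$ for every $i$. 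Summing and using $|E|=\sum_i|F_i|=|V_1|=\sum_i|W_i|$, equality must hold in each component. Consequently $d(v)=2$ for every $v\in V_1$.

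A connected graph in which every vertex has degree exactly $2$ is a cycle $C_{n_i}$, and $C_{n_i}$ contains a $3$-cycle only when $n_i=3$. Since every vertex of $W_i$ lies in a $3$-cycle, each $\tilde G_i$ must be $C_3$. Therefore $\tilde G$ is a disjoint union of $3$-cycles and $\mn_G\simeq \R^s\oplus \mn_{3,2}\oplus\cdots\oplus\mn_{3,2}$. I expect no real obstacle beyond noticing that the identity $|E|=|V_1|$ supplied by the previous lemma matches precisely the equality case of the degree bound forced by the $3$-cycle condition; all other steps are routine graph theory.
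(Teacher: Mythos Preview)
Your proof is correct and follows essentially the same strategy as the paper's: both invoke the preceding lemma to obtain $|E|=|V_1|$ and the $3$-cycle condition, then argue component-by-component that $|F_i|=|W_i|$ forces each $\tilde G_i$ to be $C_3$. The only difference is cosmetic: the paper reaches $|F_i|\ge|W_i|$ via the tree bound $|F_i|\ge|W_i|-1$ and then uses that a unicyclic graph with every vertex on a $3$-cycle must be $C_3$, whereas you reach the same inequality via the handshake lemma and the degree bound $d(v)\ge 2$, concluding that equality forces a $2$-regular connected graph, i.e.\ a cycle, hence $C_3$.
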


\begin{proof}
Example \ref{exc3} shows that any direct sum of $\mn_{3,2}$ with itself or with abelian factors admits an ad-invariant metric. So we focus on proving the converse.

Let $G$ be a graph such that $\mn_G$ admits an ad-invariant metric. 
Let $H=(W,F)$ be a connected component of $\tilde G=(V_1,E)$; any vertex in $H$ is in a 3-cycle by the previous lemma. Since  $H$ is connected we know that $|F|\geq |W|-1$. If $|F|=|W|-1$ then $H$ is a tree but in this case the vertices in $W$ are not in a $3$-cycle. So we have $|F|\geq |W|$ which togheter with \eqref{eq.tree} (for $G$) imply $|F|=|W|$. Thus $H$ is obtained from a tree by adding a sole edge. Since any element in $W$ is in a cycle we get that $H$ itself is a 3-cycle. \end{proof}

\section{Nilradicals of parabolic subalgebras}\label{sec.nilrad}

\subsection{Structure of nilradicals of parabolic subalgebras}
Let $\mgg$ be a split real simple 
Lie algebra with triangular decomposition $\mgg=\mgg^{-}+\mh+\mgg^+$ associated to a positive root system $\Delta^+$ with simple roots $\Pi$ and let 
$\mb=\mh+\mgg^+$ be the corresponding Borel subalgebra of $\mgg$.
Given $\gamma\in\Delta$, $H_\gamma\in\mh$ is its dual element through the Killing form $\kappa$ of $\mgg$, and if $\delta\in\Delta$ denote $(\gamma,\delta)=\kappa(H_\gamma,H_\delta)$.
As usual, if $\gamma\in\Delta$, then $X_\gamma$ denotes an arbitrary root vector in the root space $\mgg_\gamma$ and if $\alpha\in\Pi$,  $\coord_{\alpha}(\gamma)$ denotes the $\alpha$-coordinate of $\gamma$ 
when it is expressed as a linear combination of simple roots. Let $\gamma_{\max}$ denote the unique maximal root of $\Delta^+$. 

The set of parabolic Lie subalgebras of $\mgg$ containing $\mb$ is parametrized by 
the subsets of the set of simple roots $\Pi$ as follows.
Given a subset $\Pi_0\subset\Pi$, the corresponding parabolic subalgebra of $\mgg$
is $\mpp\simeq \mgg_1\ltimes \mn$ where
$\mgg_1=\mgg_1^-\oplus \mh\oplus \mgg_1^+$ and  
\begin{eqnarray}
\Delta_1^+ &=& \{\gamma\in\Delta^+: \coord_{\alpha}(\gamma)=0\text{ for all }\alpha\in\Pi_0\},\nonumber\\
\Delta_{\mn}^+ &=& \{\gamma\in\Delta^+: \coord_{\alpha}(\gamma)\neq0\text{ for some }\alpha\in\Pi_0
\},\nonumber\\
\mgg_1^+&=& \mbox{ subspace of $\mgg^+$ spanned by the vectors }X_\gamma \mbox{  with } \gamma\in \Delta_1^+,\nonumber\\
\mgg_1^-&=& \mbox{ subspace of $\mgg^-$ spanned by the vectors }X_{-\gamma} \mbox{  with } \gamma\in \Delta_1^+,\nonumber\\
\mn&=& \mbox{ subspace of $\mgg^+$ spanned by the vectors }X_\gamma \mbox{  with } \gamma\in \Delta_{\mn}^+.\nonumber
\end{eqnarray}
 
On the one hand, $\mgg_1$ is reductive and it can be viewed as
a Lie subalgebra of $\text{Der}(\mn)$ via $\ad_{\mgg}$. 

On the other hand, $\mn$ is nilpotent and its lower central series (which coincides, after transposing the 
indexes, with the upper central series) can be described as follows \cite[Theorem 2.12]{Ko1}. Given $\gamma\in\Delta$, let 
\[
 o(\gamma)=\sum_{\alpha\in\Pi_0} \coord_{\alpha}(\gamma)
\]
(in particular $\gamma\in\Delta_{\mn}^+$ if and only if $ o(\gamma)>0$) and, for $i\in\Z$, let 
\[
 \mg_{(i)}=\bigoplus_{\begin{smallmatrix}
               \gamma\in\Delta \\
               o(\gamma)=i
              \end{smallmatrix}} \mg_{\gamma}.
\]
If  
$\mn=\rmc^0(\mn)\supset\rmc^1(\mn)\supset\dots\supset\rmc^{k-1}(\mn)\supset\rmc^{k}(\mn)=0$ is
the lower central series of $\mn$, then
\begin{equation}\label{eq.lcs}
 \rmc^j(\mn)=\bigoplus_{i=j+1}^{k}\mg_{(i)},\;\; k=o(\gamma_{\text{max}})
 \text{ and $\rmc^{k-1}(\mn)=\mg_{(k)}$ is the center of $\mn$}.
\end{equation}
\begin{rem}\label{rem.abelian} It follows from this description of the lower central series that 
the nilradical $\mn$ is abelian if and only if $\Pi_0=\{\alpha\}$ and $\coord_{\alpha}(\gamma_{\max})=1$.
\end{rem}

For each $i\geq 2$ one has \cite{Ko1}
\begin{equation}
[\mgg_{(1)},\mgg_{(i-1)}]=\mgg_{(i)}.
\label{kostant}
\end{equation} Let $\gamma\in\triangle_\mn^+$ be such that $o(\gamma)\geq 2$ and let $\alpha\in\Pi_0$. Then Eq. \eqref{kostant} implies that at least one of the following conditions hold:
\begin{itemize}
\item[(i)] $\gamma=\delta+\beta$ where $\delta,\beta\in\Delta_\mn^+$ and 
$\coord_\alpha(\delta)=0$;
\item[(ii)] $\gamma=\tilde\gamma+\beta$ where $\tilde\gamma,\beta\in\Delta_\mn^+$ and 
$o(\beta)=1$ and $\coord_\alpha(\beta)=1$, $\coord_\alpha(\tilde\gamma)>0$.
\end{itemize}
Using these decompositions inductively we reach the following general results on the reduced root system associated to a parabolic subalgebra. 

\begin{pro}
Assume $|\Pi_0|\geq 2$ and let $\gamma\in\Delta_\mn^+$ be such that $X_\gamma\in\rmc^{k-1}(\mn)$. Then for any $\alpha\in\Pi_0$ there exists some $t\geq 1$ and roots 
$\delta,\beta_1,\ldots,\beta_t\in\Delta_\mn^+$ such that
\begin{equation}
\label{gammadecomp}
\gamma=\delta+\beta_t+\beta_{t-1}+\ldots+\beta_1
\end{equation}
and also 
$\coord_\alpha(\delta)=0$, $\coord_\alpha(\beta_j)\neq 0$ for all $j=1,\ldots,t$
 and any partial sum $\delta+\beta_t+\beta_{t-1}+\ldots+\beta_{t-s}$, $s=0,\ldots,t-1$, is a root in $\Delta_\mn^+$.
\label{progammadecomp}
\end{pro}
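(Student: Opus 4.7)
The plan is: first I show that $\coord_\alpha(\gamma)>0$ for every $\alpha\in\Pi_0$, and then build the decomposition by iterating the dichotomy (i)--(ii) recorded just before the proposition.

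Since $X_\gamma\in\rmc^{k-1}(\mn)=\mgg_{(k)}$ we have $o(\gamma)=k=o(\gamma_{\max})$. As $\gamma\le\gamma_{\max}$ in the dominance order, a standard fact on root systems yields a chain $\gamma=\gamma_0<\gamma_1<\cdots<\gamma_m=\gamma_{\max}$ in $\Delta^+$ with each $\gamma_{i+1}-\gamma_i$ a simple root. Along it, $o$ increases by $1$ precisely when the added simple root lies in $\Pi_0$; since $o(\gamma_{\max})-o(\gamma)=0$, no simple root of $\Pi_0$ is ever added, so $\coord_\alpha(\gamma)=\coord_\alpha(\gamma_{\max})$ for every $\alpha\in\Pi_0$. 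As $\mgg$ is simple, $\gamma_{\max}$ has strictly positive coefficient on every simple root, giving $\coord_\alpha(\gamma)>0$ for all $\alpha\in\Pi_0$; combined with $|\Pi_0|\ge 2$ this provides some $\alpha'\in\Pi_0\setminus\{\alpha\}$ with $\coord_{\alpha'}(\gamma)>0$.

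I next prove by induction on $o(\gamma)$ the strengthened statement: every $\gamma\in\Delta_\mn^+$ with $o(\gamma)\ge 2$, $\coord_\alpha(\gamma)>0$, and $\coord_{\alpha'}(\gamma)>0$ for some $\alpha'\in\Pi_0\setminus\{\alpha\}$ admits a decomposition as in \eqref{gammadecomp}. The previous paragraph supplies these hypotheses for the given $\gamma$. For the base case $o(\gamma)=2$, the hypotheses force $\coord_\alpha(\gamma)=\coord_{\alpha'}(\gamma)=1$; the identity \eqref{kostant} with $i=2$ writes $\gamma=\delta+\beta_1$ with $\delta,\beta_1\in\Delta_\mn^+$ both of $o$-value $1$, and the constraint $\coord_\alpha(\delta)+\coord_\alpha(\beta_1)=1$ lets me relabel so that $\coord_\alpha(\delta)=0$, settling the case with $t=1$. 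For the inductive step $o(\gamma)\ge 3$, I apply (i)--(ii) to $\gamma$: in case (i), setting $\beta_1=\beta$ gives the decomposition with $t=1$ since $\coord_\alpha(\beta_1)=\coord_\alpha(\gamma)>0$; in case (ii), I write $\gamma=\tilde\gamma+\beta_1$ with $o(\beta_1)=1$, $\coord_\alpha(\beta_1)=1$ and $\tilde\gamma\in\Delta_\mn^+$, observe that $o(\tilde\gamma)=o(\gamma)-1\ge 2$ and $\coord_{\alpha'}(\tilde\gamma)=\coord_{\alpha'}(\gamma)>0$ (because $\beta_1$ has no $\alpha'$-contribution), and apply the induction hypothesis to $\tilde\gamma$. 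Concatenating $\beta_1$ with the resulting decomposition of $\tilde\gamma$ produces the desired expression $\gamma=\delta+\beta_t+\cdots+\beta_1$; the intermediate partial sums are roots by the induction applied to $\tilde\gamma$, while the final partial sum equals $\gamma$.

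The main obstacle I anticipate is ensuring $\delta\in\Delta_\mn^+$, i.e.\ $o(\delta)>0$, and not merely $\delta\in\Delta^+$; this is what forces the extra input $\coord_{\alpha'}(\gamma)>0$ in the strengthened induction hypothesis. The first paragraph, combined with $|\Pi_0|\ge 2$, is precisely what installs this side condition for the original $\gamma$ and propagates it to $\tilde\gamma$ at each application of (ii) in the recursion.
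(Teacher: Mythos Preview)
Your proof is correct and follows essentially the same route as the paper's: both iterate the dichotomy (i)--(ii), using $|\Pi_0|\ge 2$ to guarantee $o$ stays $\ge 2$ along the recursion (you make this explicit via the auxiliary $\alpha'$ in a strengthened inductive statement, while the paper argues by direct descent with termination once $\coord_\alpha$ is exhausted). One simplification worth noting: the chain argument in your first paragraph is unnecessary, since $o(\gamma)=o(\gamma_{\max})$ together with $\coord_\alpha(\gamma)\le\coord_\alpha(\gamma_{\max})$ for every $\alpha\in\Pi$ already forces $\coord_\alpha(\gamma)=\coord_\alpha(\gamma_{\max})$ for all $\alpha\in\Pi_0$---which is exactly how the paper states it.
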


\begin{rem} \label{rem.gammadecomp} When $\gamma$ is as in \eqref{gammadecomp} one has $\sum_{j=1}^t\coord_\alpha(\beta_j)=\coord_\alpha(\gamma)$ and thus $t\leq \coord_\alpha (\gamma)$.
\end{rem}

\begin{proof} 
Let $\gamma\in\triangle_\mn^+$ such that $X_\gamma\in\rmc^{k-1}(\mn)$, then $\coord_{\tilde\alpha}(\gamma)=\coord_{\tilde\alpha}(\gamma_{\max})$ for all $\tilde\alpha\in\Pi_0$ and $o(\gamma)=k\geq 2$.

Let $\alpha\in\Pi_0$ and assume $\gamma$ satisfies (i) above, clearly \eqref{gammadecomp} holds for $t=1$. Assume this is not the case and $\gamma=\gamma_1+\beta_1$ with $o(\beta_1)=1$, $\coord_\alpha(\beta_1)=1$ and $\coord_\alpha(\gamma_1)>0$. Then $o(\gamma_1)\geq 2$ since $|\Pi_0|\geq 2$ so we can decompose $\gamma_1$ similarly.

If $\gamma_1$ verifies (i) then $\gamma_1=\delta+\beta_2$ being $\coord_\alpha(\delta)=0$. Hence $\gamma=\delta+\beta_2+\beta_1$ and \eqref{gammadecomp} holds with $t=2$. Otherwise $\gamma_1=\gamma_2+\beta_2$ with $o(\beta_2)=1$, $\coord_\alpha(\beta_2)=1$ and $\coord_{\alpha}(\gamma_2)>0$; thus $\gamma=\gamma_2+\beta_2+\beta_1$  and again $o(\gamma_2)\geq 2$, so we decompose $\gamma_2$ and so on.

Repeating this procedure one gets that $\gamma_j$ satisfies (i) for some $j\geq 1$. In fact, denote $d=\coord_\alpha(\gamma)$ and assume that $\gamma$ is decomposed using (ii) after $d-1$ steps, obtaining
$\gamma=\gamma_{d-1}+\beta_{d-1}+\beta_{d-2}+\ldots+\beta_1$ with
$\coord_\alpha(\beta_j)=1$ and $o(\beta_j)=1$ for all $j$, and $\coord_\alpha(\gamma_{d-1})>0$. As before,   $|\Pi_0|\geq 2$ implies $o(\gamma_{d-1})\geq 2$. Suppose $\gamma_{d-1}$ satisfies (ii), that is, $\gamma_{d-1}=\gamma_{d}+\beta_d$ where $\beta_d, \gamma_d\in\Delta_\mn^+$ with $o(\beta_d)=1$, $\coord_\alpha(\beta_d)=1$ and $\coord_\alpha(\gamma_{d})>0$. But then $\gamma=\gamma_{d}+\beta_{d}+\beta_{d-1}+\ldots+\beta_1$ with $\sum_{j=1}^d\coord_\alpha(\beta_j)=d=\coord_\alpha(\gamma)$, contradicting $\coord_\alpha(\gamma_{d})>0$. Therefore $\gamma_{d-1}$ verifies (i) above and the proposition holds.
\end{proof}

\begin{lm}\label{lm.subsroot}
Let $\gamma\in\Delta^+$ and $\alpha\in\Pi$ such that $\gamma\neq \alpha$.
\begin{enumerate}
\item If $\gamma+\alpha\notin \Delta^+$, then 
$\gamma-\alpha \in \Delta^+$ if and only if $(\gamma,\alpha)>0$.
\item If $\gamma-\alpha\notin \Delta^+$, then
$\gamma+\alpha\in \Delta^+$ if and only if $(\gamma,\alpha)<0$.
\end{enumerate}
\end{lm}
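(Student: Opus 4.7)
The plan is to use the $\alpha$-string through $\gamma$. Since $\mgg$ is split simple and the root system is reduced, and since $\gamma\in\Delta^+$ with $\gamma\neq \alpha$ is not a scalar multiple of $\alpha$, the set of integers $n$ with $\gamma+n\alpha\in\Delta$ is an unbroken interval $[-p,q]\subset\Z$ with $p,q\geq 0$, and the standard identity
\[
p-q=\frac{2(\gamma,\alpha)}{(\alpha,\alpha)}
\]
holds, where $(\alpha,\alpha)>0$.

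First I would reduce each of the two statements to the analogous statement in which $\Delta^+$ is replaced by $\Delta$, by observing that under the given hypotheses
\[
\gamma+\alpha\in\Delta\iff \gamma+\alpha\in\Delta^+\qquad\text{and}\qquad\gamma-\alpha\in\Delta\iff \gamma-\alpha\in\Delta^+ .
\]
The forward implication for $\gamma+\alpha$ is immediate because $\gamma$ and $\alpha$ both have nonnegative simple-root coordinates, with at least one strictly positive. For $\gamma-\alpha$, if it lay in $\Delta^-$ then $\alpha-\gamma\in\Delta^+$, which forces all simple-root coordinates of $\gamma$ other than $\coord_\alpha$ to vanish and $\coord_\alpha(\gamma)\in\{0,1\}$; combined with $\gamma\in\Delta^+$ and $\gamma\neq\alpha$ this is impossible.

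Given this reduction, part (1) follows at once: the hypothesis $\gamma+\alpha\notin\Delta^+$ gives $q=0$, whence $p=2(\gamma,\alpha)/(\alpha,\alpha)$, and since $p$ is a nonnegative integer we have $\gamma-\alpha\in\Delta$ iff $p\geq 1$ iff $(\gamma,\alpha)>0$. Part (2) is symmetric: the hypothesis $\gamma-\alpha\notin\Delta^+$ yields $p=0$, hence $q=-2(\gamma,\alpha)/(\alpha,\alpha)$, so $\gamma+\alpha\in\Delta$ iff $q\geq 1$ iff $(\gamma,\alpha)<0$.

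The proof is entirely routine once the $\alpha$-string formula is in hand; there is no real obstacle. The only point requiring a brief argument is the reduction ``being in $\Delta$ is the same as being in $\Delta^+$'' for $\gamma\pm\alpha$, which is what allows the one-sided hypothesis on $\Delta^+$ to pin down the corresponding endpoint of the $\alpha$-string.
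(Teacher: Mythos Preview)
Your proof is correct and follows essentially the same route as the paper: both arguments invoke the $\alpha$-string through $\gamma$ and read off the endpoint from the standard identity relating $p,q$ and $2(\gamma,\alpha)/(\alpha,\alpha)$. You are simply more explicit than the paper about the reduction from $\Delta^+$ to $\Delta$ (the paper silently passes from $\gamma+\alpha\notin\Delta^+$ to $q=0$ and from $p\leq -1$ to $\gamma-\alpha\in\Delta^+$), which is a welcome clarification rather than a different approach.
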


\begin{proof}
The $\alpha$-string through $\gamma$ is $\gamma+n\alpha$ where $p\leq n\leq q$ \cite[Proposition 8.4]{Hu} and \begin{equation}
p+q=-2\frac{(\gamma,\alpha)}{(\alpha,\alpha)}.\label{eq.string}
\end{equation} Assume $\gamma+\alpha\notin\Delta^+$, then $q=0$ and from the equation above, $(\gamma,\alpha)>0$ if and only if $p\leq -1$, or equivalently, $\gamma-\alpha\in\Delta^+$.
A similar argument proves the second assertion.

\end{proof}

This lemma and the description of the lower central series of $\mn$ in Eq. \eqref{eq.lcs} give a strategy to compute a basis (and thus the dimension) of both $\rmc^{k-1}(\mn)$ and $\rmc^1(\mn)$, as we describe in the following example.

We denote $\lela\lela\gamma,\alpha\rira\rira=2\frac{(\gamma,\alpha)}{(\alpha,\alpha)}$ which are the integers in the Cartan matrix when $\gamma$ and $\alpha$ simple roots. Notice that $(\gamma,\alpha)$ and $\lela\lela\gamma,\alpha\rira\rira$ either have the same sign or are simultaneously zero, since $(\alpha,\alpha)>0$. 

\begin{example} \label{exampleE} Consider the simple Lie algebra $E_6$ where $\Pi=\{\gamma_1,\ldots,\gamma_6\}$ ordered as in \cite[Theorem 11.4]{Hu}. Then $\gamma_{\max}=\gamma_1+2\gamma_2+2\gamma_3+3\gamma_4+2\gamma_5+\gamma_6$. Assume $\Pi_0=\{\alpha\}$; in particular $\mn$ is abelian if $\alpha=\gamma_1$ or $\alpha=\gamma_6$ (see Remark \ref{rem.abelian}). Below we assume $\mn$ is nonabelian.

Clearly $\gamma_{\max}+\gamma_i$ is never a root for $i=1,\ldots,6$. Moreover $\lela\lela\gamma_{\max},\gamma_i\rira\rira=1$ if $i=2$ and zero otherwise. Thus $\rmc^{k-1}(\mn)=\R X_{\gamma_{\max}}$ if $\alpha=\gamma_2$. 

If $\alpha\neq \gamma_2$ then $X_{\gamma_{\max}}$ and $X_{\gamma_{\max}-\gamma_2}$ are elements in $\rmc^{k-1}(\mn)$. Notice that $\gamma_{\max}-2\gamma_2$ is not a root since in \eqref{eq.string} we have $q=0$ and $p=-1$. Also $\lela\lela\gamma_{\max}-\gamma_2,\gamma_i\rira\rira=1$ if $i=4$ and $\leq 0$ otherwise. Hence $\rmc^{k-1}(\mn)$ is either $\R X_{\gamma_{\max}}\oplus\R X_{\gamma_{\max}-\gamma_2}$ if $\alpha=\gamma_4$ \linebreak or contains the subspace $\R X_{\gamma_{\max}}\oplus \R X_{\gamma_{\max}-\gamma_2}\oplus \R X_{\gamma_{\max}-(\gamma_2+\gamma_4)}$ if $\alpha\neq\gamma_4$. Following this reasoning one obtains
\begin{equation}\dim \rmc^{k-1}(\mn)=\left\{\begin{array}{l}
1 \quad \mbox{ if }\Pi_0=\{\gamma_2\},\\
2 \quad \mbox{ if }\Pi_0=\{\gamma_4\},\\
5 \quad \mbox{ if }\Pi_0=\{\gamma_3\},\\
4 \quad \mbox{ if }\Pi_0=\{\gamma_5\}.
\end{array}\right.\label{dimE6}\end{equation}

In order to give a basis of $\mn/\rmc^{1}(\mn)$ we use 2. in Lemma \ref{lm.subsroot}. Assume $\alpha=\gamma_2$, then by Eq. \eqref{eq.lcs} $X_\gamma$ defines a nonzero element in $\mn/\rmc^{1}(\mn)$ if $\coord_{\gamma_2}(\gamma)=1$. Also, for $\gamma_i\neq \gamma_2$,  $\lela\lela\gamma_2,\gamma_i\rira\rira>0$ only when $i=4$ hence $\gamma_2,\gamma_2+\gamma_4$ induce linearly independent elements in $\mn/ \rmc^{1}(\mn)$. Thus $\dim \mn/\rmc^{1}(\mn)\geq 2>\dim \rmc^{k-1}(\mn)$ in the case $\alpha=\gamma_2$.

Similar arguments give $\dim \mn/\rmc^{1}(\mn)>\dim \rmc^{k-1}(\mn)$ for any other case in \eqref{dimE6}. 
\end{example}

\subsection{Nilradicals admitting ad-invariant metrics}
We continue with the notation in the previous subsection. We identify the nilradicals $\mn$ admitting ad-invariant metrics in terms of the simple Lie algebra $\mgg$ and the set $\Pi_0$ associated to $\mn$.

\begin{pro}\label{pro.2roots}
Let $\mgg$ be a split real form of a complex simple Lie algebra and let $\mn$ be the nilradical of a parabolic subalgebra of $\mgg$ associated to a set $\Pi_0$ of simple roots. If $|\Pi_0|\geq 2$ then $\mn$ does not admit ad-invariant metrics.
\end{pro}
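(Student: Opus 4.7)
The plan is to apply Proposition~\ref{pro.cap} via the decomposition obstruction \eqref{padinv.theta}. Assume $\mn$ carries an ad-invariant metric and set $\mv_\alpha=\mg_{(1)}^\alpha$ for each $\alpha\in\Pi_0$. Using \eqref{eq.lcs}, these subspaces are nonzero (each contains $X_\alpha$) and $\mn=\bigoplus_{\alpha\in\Pi_0}\mv_\alpha\oplus\rmc^1(\mn)$, so \eqref{padinv.theta} gives $\Theta_{\{\mv_\alpha\}}=0$. The aim is to exhibit the central vector $X_{\gamma_{\max}}$ inside $\Theta$, obtaining a contradiction. Since $X_{\gamma_{\max}}\in\mz=\mg_{(k)}$, it suffices to show that $X_{\gamma_{\max}}\in[\mz(\mv_\alpha),\mn]$ for every $\alpha\in\Pi_0$.

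The unifying trick is the following: if I can write $\gamma_{\max}=\mu+\nu$ with $\mu,\nu\in\Delta_\mn^+$ and $\coord_\alpha(\mu)=\coord_\alpha(\gamma_{\max})$, then for any $\gamma'\in\mg_{(1)}^\alpha$ one has $\coord_\alpha(\mu+\gamma')=\coord_\alpha(\gamma_{\max})+1$, which exceeds the maximum value of $\coord_\alpha$ on $\Delta$; hence $\mu+\gamma'\notin\Delta$ and $[X_\mu,X_{\gamma'}]=0$, so $X_\mu\in\mz(\mv_\alpha)$. Since $\mu+\nu=\gamma_{\max}\in\Delta$, $[X_\mu,X_\nu]$ is a nonzero multiple of $X_{\gamma_{\max}}$, placing $X_{\gamma_{\max}}$ in $[\mz(\mv_\alpha),\mn]$.

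To produce such $\mu,\nu$, I use that $|\Pi_0|\ge 2$ to pick some $\alpha'\in\Pi_0\setminus\{\alpha\}$ and apply Proposition~\ref{progammadecomp} to $\gamma_{\max}\in\rmc^{k-1}(\mn)$ with respect to $\alpha'$, obtaining $\gamma_{\max}=\delta+\beta_t+\cdots+\beta_1$ with $\coord_{\alpha'}(\delta)=0$, partial sums in $\Delta_\mn^+$, and (whenever $t\ge 2$, which follows from applying case (ii) of the construction as in the proof of that proposition) $\beta_1\in\mg_{(1)}^{\alpha'}$ so that $\coord_\alpha(\beta_1)=0$. Setting $\nu=\beta_1$ and $\mu=\delta+\beta_t+\cdots+\beta_2$ (a partial sum, hence a root in $\Delta_\mn^+$), I get $\coord_\alpha(\mu)=\coord_\alpha(\gamma_{\max})$, and the trick of the previous paragraph applies.

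The main obstacle is ensuring $t\ge 2$ can be arranged. Case (ii) of Proposition~\ref{progammadecomp} requires $\coord_{\alpha'}(\gamma_{\max})\ge 2$, so the obstruction is the scenario in which every $\alpha'\in\Pi_0\setminus\{\alpha\}$ satisfies $\coord_{\alpha'}(\gamma_{\max})=1$, forcing $t=1$. In that situation I will argue directly, exploiting the conditions in the $t=1$ decomposition from Proposition~\ref{progammadecomp}, that one can still choose $\delta\in\Delta_\mn^+$ with $\coord_{\alpha'}(\delta)=0$ and $\coord_\alpha(\delta)=\coord_\alpha(\gamma_{\max})$; then $\beta_1=\gamma_{\max}-\delta$ again has $\coord_\alpha(\beta_1)=0$ and the previous argument applies with $\mu=\delta$, $\nu=\beta_1$. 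Since in the $k=2$ sub-case this forces $|\Pi_0|=2$ and the hypotheses of Corollary~\ref{padinv}(\ref{padinv.2}) (via \eqref{kostant}), the remaining cases can alternatively be closed using that corollary. Verifying the existence of the required $\delta$ uniformly across simple root systems, via Lemma~\ref{lm.subsroot} and the $\alpha$-string structure, is the essential technical step.
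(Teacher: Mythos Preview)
Your strategy differs from the paper's, and the place where it breaks is precisely the ``unifying trick''. You assert that it suffices to write $\gamma_{\max}=\mu+\nu$ with $\mu,\nu\in\Delta_\mn^+$ and $\coord_\alpha(\mu)=\coord_\alpha(\gamma_{\max})$, and then try to manufacture such a split via Proposition~\ref{progammadecomp} applied at some $\alpha'\neq\alpha$. But that two-root split need not exist. Take $\mgg$ of type $B_2$ with $\Pi_0=\Pi=\{\gamma_1,\gamma_2\}$ and $\gamma_{\max}=\gamma_1+2\gamma_2$. For $\alpha=\gamma_2$ the only positive root with $\gamma_2$-coordinate $0$ is $\gamma_1$, and $\gamma_{\max}-\gamma_1=2\gamma_2$ is not a root; hence no pair $(\mu,\nu)$ as in your trick exists. (Here $X_{\gamma_{\max}}$ does lie in $[\mz(\mv_{\gamma_2}),\mn]$, but through $X_{\gamma_2}\in\mz(\mv_{\gamma_2})$---since $2\gamma_2\notin\Delta$---not through a vector of maximal $\gamma_2$-coordinate.) Your diagnosis of the obstruction is also off: $t=1$ in Proposition~\ref{progammadecomp} occurs whenever condition (i) is available, which is unrelated to $\coord_{\alpha'}(\gamma_{\max})=1$; and conversely when $t\ge2$ the statement of the proposition does not guarantee $o(\beta_1)=1$, only its proof does. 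So the reduction to the residual case you describe is not correct, and the ``essential technical step'' you leave open is not a mere verification.

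The paper sidesteps all of this by abandoning the $\Theta$ obstruction and arguing with the metric directly. For $X_\gamma\in\rmc^{k-1}(\mn)$ and $X_\mu\in\mgg_{(1)}$, pick $\alpha\in\Pi_0$ with $\coord_\alpha(\mu)=1$ and apply Proposition~\ref{progammadecomp} to $\gamma$ \emph{with respect to that same $\alpha$}: $\gamma=\delta+\beta_t+\cdots+\beta_1$ with $\coord_\alpha(\delta)=0$ and $\coord_\alpha(\beta_j)\neq0$. Iterated ad-invariance gives
\[
\lela X_\gamma,X_\mu\rira=\lela X_\delta,\,[X_{\beta_t},\ldots,[X_{\beta_1},X_\mu]]\rira,
\]
and the right-hand bracket vanishes because $\coord_\alpha(\beta_t+\cdots+\beta_1+\mu)>\coord_\alpha(\gamma_{\max})$. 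Thus $\rmc^{k-1}(\mn)\perp\mgg_{(1)}$, and since $\rmc^{k-1}(\mn)=\rmc^1(\mn)^\bot$ one gets $\rmc^{k-1}(\mn)\subset\rad=0$. No two-root split is needed; the full chain does the work. The paper even remarks after the proof that your $\Theta$-style argument is transparent when $t=1$ (with respect to $\alpha$), and that the inner-product computation is what extends it to arbitrary $t$.
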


\begin{proof} The Lie algebra $\mn$ is $k$-step nilpotent with $k\geq 2$ since $|\Pi_0|\geq 2$, $\mn$ decomposes as $\mn=\mgg_{(1)}\oplus \rmc^1(\mn)$ (see \eqref{eq.lcs}) and $\rmc^{k-1}(\mn)=\mz$. If $\mn$ admits an ad-invariant metric $\bil$, then $\rmc^{k-1}(\mn)=\rmc^1(\mn)^\bot$ by Proposition \ref{padinv.dim}. Below we show that  $\rmc^{k-1}(\mn)$ is also orthogonal to $\mgg_{(1)}$ and therefore $\rmc^{k-1}(\mn)\subset \rad=0$ which is a contradiction.

Let $X_\gamma\in\rmc^{k-1}(\mn)$ and $X_\mu\in\mgg_{(1)}$. There is some  $\alpha\in\Pi_0$ such that $\coord_\alpha(\mu)=1$. By Propostion \ref{progammadecomp} there exist some $t\geq 1$ and roots $\delta,\beta_j\in\Delta^+_\mn$ verifying $\coord_\alpha(\delta)=0$ and $\coord_\alpha(\beta_j)\neq 0$ for $j=1,\ldots,t$ such that
$$ \gamma=\delta+\beta_t+\ldots+\beta_1$$  and  $\delta+\beta_t+\ldots+\beta_{t-s}\in\Delta_\mn^+$ for all $s=0,\ldots,t-1$. In particular 
$$X_\gamma=[[[X_\delta,X_{\beta_t}],X_{\beta_{t-1}}],\cdots,X_{\beta_1}] \;\;\mbox{ (up to non-zero multiple)}.$$
The ad-invariance of the metric implies
\begin{eqnarray}
\lela X_\gamma,X_\mu\rira&=&\lela [[[X_\delta,X_{\beta_t}],X_{\beta_{t-1}}],\cdots,X_{\beta_1}],X_\mu\rira\nonumber\\
&=&\lela [[[X_\delta,X_{\beta_t}],X_{\beta_{t-1}}],\cdots,X_{\beta_2}],[X_{\beta_1},X_\mu]\rira\nonumber\\
&=&\lela X_\delta,[X_{\beta_t},\cdots,[X_{\beta_2},[X_{\beta_1},X_{\mu}]]]\rira. \label{equival}\end{eqnarray}

Notice that $\beta_t+\beta_{t-1}+\ldots+\beta_1+\mu$ is not a root in $\Delta_\mn^+$ since, when written as linear combination of simple roots in $\Pi$, its coefficient in $\alpha$ is greater than $\coord_\alpha(\gamma_{\max})$. Therefore  $[X_{\beta_t},\cdots,[X_{\beta_2},[X_{\beta_1},X_{\mu}]]]=0$ and from \eqref{equival}, $\lela X_\gamma,X_\mu\rira=0$. 
\end{proof}

The proof above motivated the result in Proposition \ref{pro.cap}. In fact any $\mn$ associated to $\Pi_0=\{\alpha_1,\ldots,\alpha_s\}$ admits the decomposition $\mn=\mv_1\oplus\cdots\oplus\mv_s\oplus\rmc^1(\mn)$ where $\mv_i$ is the span of root vectors $X_\beta$ with $\coord_{\alpha_i}(\beta)=1$ and $\coord_{\alpha_j}(\beta)=0$ for $j\neq i$. Fix $i=1,\ldots,s$ and lets assume that every $X_\gamma\in\rmc^{k-1}(\mn)$ satisfies \eqref{gammadecomp} with $t=1$. Then $X_\gamma=[X_\delta,X_\beta]$ with $X_\beta\in\mz(\mv_i)$ since $\coord_{\alpha_i}(\beta)=\coord_{\alpha_i}(\gamma_{\max})$ by Remark \ref{rem.gammadecomp}. In particular this implies $\rmc^{k-1}(\mn)\subset [\mz(\mv_i),\mn]$.
The proof above is a generalization of this fact for $t\geq 2$.

\begin{teo}\label{teo.classif}
Let $\mn$ be a nilradical of a parabolic subalgebra associated to a split real form of a simple Lie algebra $\mgg$. Then $\mn$ admits ad-invariant metric if and only if $\mn$ is abelian or it is isomorphic to either $\mn_{3,2}$ or $\mn_{2,3}$.
\end{teo}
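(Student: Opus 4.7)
The backward direction is standard: abelian Lie algebras admit ad-invariant metrics trivially, and the same for $\mn_{3,2}$ and $\mn_{2,3}$ is \cite{dBO,FS}. The forward direction is the content, and my plan is to combine the reductions already in the paper with a uniform root-system case analysis.

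First I would use Proposition \ref{pro.2roots} to reduce to $|\Pi_0|\leq 1$. The cases $\Pi_0=\emptyset$ and $\Pi_0=\{\alpha\}$ with $\coord_\alpha(\gamma_{\max})=1$ yield $\mn$ abelian (the latter by Remark \ref{rem.abelian}). I may therefore assume $\Pi_0=\{\alpha\}$ and $d:=\coord_\alpha(\gamma_{\max})\geq 2$. Kostant's description \eqref{eq.lcs} provides the grading $\mn=\bigoplus_{i=1}^d\mgg_{(i)}$ with $\rmc^j(\mn)=\bigoplus_{i>j}\mgg_{(i)}$; since the upper central series coincides with the reverse of the lower central series, also $\rmc_j(\mn)=\bigoplus_{i>d-j}\mgg_{(i)}$. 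Writing $n_i=\dim\mgg_{(i)}$ and applying Proposition \ref{pro.adinv} with $\mv=\mn$ for every $j$, subtracting consecutive instances of \eqref{eq.dim} yields the palindromic identity
\[
 n_i=n_{d-i+1}\qquad \text{for } i=1,\ldots,d.
\]

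Next, I would run the case analysis across the split simple Lie algebras, in the style of Example \ref{exampleE}. Type $A_\ell$ contributes only abelian nilradicals. In the classical types $B_\ell,C_\ell,D_\ell$ one always has $d=2$, and root counting shows that $n_1=n_2$ forces either $(\mgg,\alpha)=(B_3,\gamma_3)$---where $\mgg_{(1)}=\span\{X_{e_1},X_{e_2},X_{e_3}\}$ and $\mgg_{(2)}=\span\{X_{e_i+e_j}\}_{i<j}$ with $[X_{e_i},X_{e_j}]=X_{e_i+e_j}$ identify $\mn\cong \mn_{3,2}$---or a sporadic family of pairs: in $C_\ell$ one needs $5i=4\ell-1$, in $D_\ell$ one needs $5i=4\ell+1$, and in $B_\ell$ with $i<\ell$ one needs $5i=4\ell+3$. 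For the sporadic $C_\ell$ and $D_\ell$ pairs, the splitting $\mv_1=\span\{X_{e_j-e_k}\}$, $\mv_2=\span\{X_{e_j+e_k}\}$ satisfies $[\mv_m,\mv_m]=0$ and $[\mv_1,\mv_2]=\mgg_{(2)}$, so Corollary \ref{padinv}(2) rules them out. For the sporadic $B_\ell$ pairs this split fails (the short roots $X_{e_j}$ self-bracket into $\mgg_{(2)}$), so I would instead test Proposition \ref{pro.adinv} with $\mv=\span\{X_{e_j}:j\leq i\}$: direct bracket computation yields $\dim\mz(\mv)+\dim\rmc^1(\mv)-\dim\mn=i(i-3)/2>0$ whenever $i>3$, contradicting \eqref{eq.dim}.

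The exceptional types $E_6,E_7,E_8,F_4$ are handled by computing $n_1$ and $n_d$ via iterated use of Lemma \ref{lm.subsroot}, precisely as in Example \ref{exampleE} for $E_6$; in every case $n_1>n_d$, so palindromy fails. Finally, $G_2$: the pair $(G_2,\gamma_2)$ has $(n_1,n_2)=(4,1)$ and fails palindromy, whereas $(G_2,\gamma_1)$ gives $(n_1,n_2,n_3)=(2,1,2)$, palindromic, and the bracket relations among $X_{\gamma_1},X_{\gamma_1+\gamma_2},X_{2\gamma_1+\gamma_2},X_{3\gamma_1+\gamma_2},X_{3\gamma_1+2\gamma_2}$ identify $\mn\cong \mn_{2,3}$. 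The main obstacle of the proof is the residual analysis in the sporadic classical pairs, where dimension palindromy on $\mn$ itself is insufficient; the key idea, as above, is to descend to a cleverly chosen subspace $\mv\subsetneq\mgg_{(1)}$ to unlock Corollary \ref{padinv}(2) or the sharper dimension test of Proposition \ref{pro.adinv}.
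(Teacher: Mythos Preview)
Your argument is correct, and the overall architecture matches the paper: reduce to $|\Pi_0|=1$ via Proposition~\ref{pro.2roots}, dispose of the abelian cases, then run a type-by-type analysis using the obstructions from Section~\ref{sec.metric}. The exceptional types $E,F,G$ are handled identically to the paper.

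Where you diverge is in the classical types. The paper does not introduce the palindromic identity $n_i=n_{d-i+1}$; instead, for types $C$ and $D$ it applies Corollary~\ref{padinv}\eqref{padinv.2} directly to \emph{every} nonabelian $\Pi_0=\{\gamma_l\}$ (the decomposition $\mgg_{(1)}=\mv_1\oplus\mv_2$ with $[\mv_i,\mv_i]=0$ is available for all such $l$, not just your sporadic ones), and for type $B$ with $l<n$ it shows $\Theta_{\mv_1,\mv_2,\mv_3}\neq 0$ via the three-block splitting $\mv_1\oplus\mv_2\oplus\mv_3$ of $\mgg_{(1)}$, again for all $l$. Your route instead filters first by palindromy, leaving only the arithmetic progressions $5i=4\ell\pm1$, $5i=4\ell+3$, and then kills the residuals: for $C,D$ you fall back on Corollary~\ref{padinv}\eqref{padinv.2} (so the palindromy step was in fact unnecessary there), while for $B$ you invoke \eqref{eq.dim} with $\mv=\span\{X_{e_j}:j\le i\}$ and the count $\dim\mz(\mv)+\dim\rmc^1(\mv)-\dim\mn=i(i-3)/2$, which is indeed positive on the sporadic set since the smallest surviving index is $i=7$. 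This is a genuinely different obstruction from the paper's $\Theta$-argument, and it is a nice illustration of Proposition~\ref{pro.adinv} for $\mv\subsetneq\mn$; on the other hand, the paper's approach is more economical in that a single structural obstruction disposes of each classical type uniformly, without the preliminary palindromy sieve or the arithmetic on $i$ and $\ell$.
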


\begin{proof} 
For a simple Lie algebra $\mgg$ we consider the numbering of simple roots $\Pi=\{\gamma_1,\gamma_2,\ldots,\gamma_n\}$ as in \cite[Theorem 11.4]{Hu}.

Let $\mn$ be the nilradical associated to $\Pi_0\subset \Pi$ and assume it admits an ad-invariant metric. Proposition \ref{pro.2roots} imply $\Pi_0=\{\alpha\}$; the relevant cases to consider are those for which $\coord_\alpha(\gamma_{\max})\geq 2$ since $\mn$ is abelian otherwise. We give the proof type by type:

$\bullet$ Type A. Here $\coord_\alpha(\gamma_{\max})=1$ for all $\alpha\in \Pi$.

$\bullet$ Type B. Notice that $\coord_{\gamma_1}(\gamma_{\max})=1$. Also if $\alpha=\gamma_n$ then $\mn$ is the free nilpotent Lie algebra $\mn_{,2}$ which admits ad-invariant metric if and only if $n=3$ \cite{dBO}. 

Let $\alpha=\gamma_l$ for some $2\leq l\leq n-1$, then $\mn$ is 2-step nilpotent and $\mn=\mv_1\oplus\mv_2\oplus\mv_3\oplus\rmc^1(\mn)$ where
$$\begin{array}{rclrcl}
\mv_1&=&\span\{X_{\epsilon_i-\epsilon_j}:\;1\leq i\leq l<j\},&
\mv_3&=&\span\{X_{\epsilon_i}:\;1\leq i\leq l\},\\
\mv_2&=&\span\{X_{\epsilon_i+\epsilon_j}:\;1\leq i\leq l<j\},&
\rmc^1(\mn)&=&\span\{X_{\epsilon_i+\epsilon_j}:\;1\leq i<j\leq l\}.
\end{array}$$
Moreover $\rmc^1(\mn)=\mz=[\mv_1,\mv_2]=[\mv_3,\mv_3]$ and $[\mv_i,\mv_3]=0$ for $i=1,2$. This implies $\mz\subset [\mz(\mv_i),\mn]$ for $i=1,2,3$. Thus $\Theta_{\mv_1,\mv_2,\mv_3}\neq 0$ for this decomposition and \eqref{padinv.theta} does not hold and $\mn$ cannot be endowed with an ad-invariant metric.

$\bullet$ Type C. Here $\coord_{\gamma_n}(\gamma_{\max})=1$, so consider $\alpha=\gamma_l$ for some $l=1,\ldots,n-1$. In this case, $\mn=\mv_1\oplus\mv_2\oplus\rmc^1(\mn)$ where
$$\begin{array}{rclrcl}
\mv_1&=&\span\{X_{\epsilon_i-\epsilon_j}:\;1\leq i\leq l<j\},&
\rmc^1(\mn)&=&\span\{X_{\epsilon_i+\epsilon_j}:\;1\leq i<j\leq l\}.\\
\mv_2&=&\span\{X_{\epsilon_i+\epsilon_j}:\;1\leq i\leq l<j\},&&&
\end{array}$$
Also, $\mz=\rmc^{1}(\mn)=[\mv_1,\mv_2]$ and $0=[\mv_i,\mv_i]$, for $i=1,2$. Thus Corollary \ref{padinv} implies that $\mn$ does not admit ad-invariant metric. 

$\bullet$ Type D.  Notice that $\coord_{\gamma_i}(\gamma_{\max})=1$ if $i=1,n-1,n$, so assume $\alpha=\gamma_l$ for some $l=2,\ldots,n-2$. In this case,  $\mn=\mv_1\oplus\mv_2\oplus\rmc^1(\mn)$ where
$$\begin{array}{rclrcl}
\mv_1&=&\span\{X_{\epsilon_i-\epsilon_j}:\;1\leq i\leq l<j\},&
\rmc^1(\mn)&=&\span\{X_{\epsilon_i+\epsilon_j}:\;1\leq i<j\leq l\}.\\
\mv_2&=&\span\{X_{\epsilon_i+\epsilon_j}:\;1\leq i\leq l<j\},&&&
\end{array}$$
Again $\mz=\rmc^{1}(\mn)=[\mv_1,\mv_2]$ and $0=[\mv_i,\mv_i]$, for $i=1,2$ so $\mn$ does not admit ad-invariant metrics.

$\bullet$ Type G. When $\mgg=\mgg_2$ straightforward computations show that $\mn=\mn_{2,3}$ if $\alpha=\gamma_1$ so it admits ad-invariant metric \cite{dBO}. To the contrary, when $\alpha=\gamma_2$ the nilradical $\mn$ verifies $\dim \mz=1$ and $\dim \mn-\dim\rmc^1(\mn)\geq 2$ so Eq. \eqref{eq.dim} does not hold and $\mn$ cannot be endowed with an ad-invariant metric.

$\bullet$ Types E and F. Every nilradical is nonabelian. The argument given in Example \ref{exampleE} applies in these cases and one gets $\dim\rmc^{k-1}(\mn)+\dim\rmc^{1}(\mn)>\dim \mn$, independently of $\alpha\in\Pi$. Thus $\mn$ does not admit ad-invariant metrics by Proposition \ref{pro.adinv}
\end{proof} \bigskip

\noindent {\bf Acknowledgment.} Special thanks to Leandro Cagliero for the formal statement and guidelines for the proof of Proposition \ref{progammadecomp}.

\end{document}